\newtheorem{theorem}{Theorem}
\theoremstyle{plain}
\numberwithin{equation}{section}
\begin{document}
\title[The Properties of Bertrand Curves in Dual Space]{The Properties of
Bertrand Curves in Dual Space}
\author{\.{I}lkay ARSLAN G\"{U}VEN}
\address{ }
\email{}
\urladdr{}
\author{\.{I}pek A\u{G}AO\u{G}LU}
\curraddr{ }
\email{ }
\urladdr{}
\date{}
\subjclass[2000]{53A04 , 53A25 , 53A40.}
\keywords{Bertrand curves, dual space}

\begin{abstract}
In this study, we investigate Bertrand curves in three dimensional dual
space $\mathbb{D}^{3}$ and we obtain the characterizations of these curves
in dual space $\mathbb{D}^{3}$. Also we show that involutes of a curve
constitute Bertrand pair curves.
\end{abstract}

\maketitle

\section{Introduction}

In the study of differential geometry, the characterizations of the curves
and the corrsponding relations between the curves are significant problem.
It is well known that many important results in the theory of the curves in $%
E^{3}$ were given by G. Monge and then G. Darboux detected the idea of
moving frame. After this Frenet defined moving frame and special equations
which are used in mechanics, kinematics and physics.

A set of orthogonal unit vectors can be built, if a curve is differentiable
in an open interval, at each point. These unit vectors are called Frenet
frame. The Frenet vectors along the curve, define curvature and torsion of
the curve. The frame vectors, curvature and torsion of a curve constitute
Frenet apparatus of the curve.

It is certainly well known that a curve can be explained by its curvature
and torsion except as to its position in space. The curvature ($\kappa )$
and torsion ($\tau )$ of a regular curve help us to specify the shape and
size of the curve. Such as; If $\kappa =\tau =0$, then the curve is a
geodesic. If $\kappa \neq 0$ (constant) and $\tau =0$, then the curve is a
circle with radius $\frac{1}{\kappa }$. If $\kappa \neq 0$ (constant) and $%
\tau \neq 0$ (constant), then the curve is a helix.

Bertrand curves can be given as another example of that relation. Bertrand
curves are discovered in 1850, by J. Bertrand who is known for his
applications of differential equations to physics, especially
thermodynamics. A Bertrand curve in $E^{3}$ is a curve such that its
principal normal vectors are the principal normal vectors of an other curve.
It is proved in most studies on the subject that the characteristic property
of a Bertrand curve is the existence of a linear relation between its
curvature and torsion as:%
\begin{equation*}
\lambda \kappa +\mu \tau =1
\end{equation*}%
with constants $\lambda ,$ $\mu $ where $\lambda \neq 0$ (see \cite{Kuh}).

Dual numbers were defined by W.K.Clifford (1849-1879). After him E. Sudy
used dual numbers and dual vectors to clarify a mapping from dual unit
sphere to three dimensional Euclidean space $E^{3}$. This mapping is called
Study mapping. Study mapping corresponds the dual points of a dual unit
sphere to the oriented lines in $E^{3}.$ So the set of oriented lines in
Euclidean space $E^{3}$ is one to one correspondence with the points of dual
space in $\mathbb{D}^{3}.$

In this paper, we study Bertrand curves in dual space $\mathbb{D}^{3}.$

\section{Preliminaries}

We now recall some basic notions about dual space and apparatus of curves.

The set $\mathbb{D}$ is called the dual number system and the elements of
this set are in type of $\widehat{a}=a+\varepsilon a^{\ast }$. Here $a$ and $%
a^{\ast }$ are real numbers and $\varepsilon ^{2}=0$ which is called a dual
unit. The elements of the set $\mathbb{D}$ are called dual numbers. The set $%
\mathbb{D}$ is given by 
\begin{equation*}
\mathbb{D}=\left\{ \widehat{a}=a+\varepsilon a^{\ast }\mid a,a^{\ast
}\epsilon \text{ }\mathbb{R}\right\} .
\end{equation*}%
For the dual number $\widehat{a}=a+\varepsilon a^{\ast },$ $a\in \mathbb{R}$
is called the real part of $\widehat{a}$ and $a^{\ast }\in \mathbb{R}$ is
called the dual part of $\widehat{a}.$

Two inner operations and equality on $\mathbb{D}$ are defined for $\widehat{a%
}$ $=a+\varepsilon a^{\ast }$ and $\widehat{b}=b+\varepsilon b^{\ast }$, as ;

1) $+:\mathbb{D\times D\longrightarrow D}$%
\begin{equation*}
\widehat{a}+\widehat{b}=(a+\varepsilon a^{\ast })+(b+\varepsilon b^{\ast
})=(a+b)+\varepsilon (a^{\ast }+b^{\ast })
\end{equation*}%
is called the addition in $\mathbb{D}.$

\bigskip 2) $\cdot :\mathbb{D\times D\longrightarrow D}$%
\begin{equation*}
\widehat{a}\cdot \widehat{b}=(a+\varepsilon a^{\ast }).(b+\varepsilon
b^{\ast })=a.b+\varepsilon (ab^{\ast }+ba^{\ast })
\end{equation*}%
is called the multiplication in $\mathbb{D}.$

3) $\widehat{a}=\widehat{b}$ if and only if $a=b$ and $a^{\ast }=b^{\ast }.$%
(see\cite{Kos,Vel})

Also the set $\mathbb{D}=\left\{ \widehat{a}=a+\varepsilon a^{\ast }\mid
a,a^{\ast }\epsilon \text{ }\mathbb{R}\right\} $ forms a commutative ring
with the following operations%
\begin{eqnarray*}
i)\text{ \ }(a+\varepsilon a^{\ast })+(b+\varepsilon b^{\ast })
&=&(a+b)+\varepsilon (a^{\ast }+b^{\ast }) \\
ii)\text{ \ }(a+\varepsilon a^{\ast }).(b+\varepsilon b^{\ast })
&=&a.b+\varepsilon (ab^{\ast }+ba^{\ast }).
\end{eqnarray*}%
The division of two dual numbers $\widehat{a}=a+\varepsilon a^{\ast }$ and $%
\widehat{b}=b+\varepsilon b^{\ast }$ provided $b\neq 0$ can be defined as%
\begin{equation*}
\frac{\widehat{a}}{\widehat{b}}=\frac{a+\varepsilon a^{\ast }}{b+\varepsilon
b^{\ast }}=\frac{a}{b}+\varepsilon \frac{a^{\ast }b-ab^{\ast }}{b^{2}}.
\end{equation*}%
The set%
\begin{equation*}
\mathbb{D}^{3}=\mathbb{D}\times \mathbb{D}\times \mathbb{D}=\left\{ 
\begin{array}{c}
\overrightarrow{\widehat{a}}\mid \overrightarrow{\widehat{a}}=\left(
a_{1}+\varepsilon a_{1}^{\ast },a_{2}+\varepsilon a_{2}^{\ast
},a_{3}+\varepsilon a_{3}^{\ast }\right) \\ 
=\left( a_{1},a_{2},a_{3}\right) +\varepsilon \left( a_{1}^{\ast
},a_{2}^{\ast },a_{3}^{\ast }\right) \\ 
=\overrightarrow{a}+\varepsilon \overrightarrow{a^{\ast }},\text{ \ \ }%
\overrightarrow{a}\in \mathbb{R}^{3}\text{ },\text{ }\overrightarrow{a^{\ast
}}\in \mathbb{R}^{3}%
\end{array}%
\right\}
\end{equation*}%
is a module on the ring $\mathbb{D}$ which is called $\mathbb{D}$-Module and
the elements are dual vectors consisting of two real vectors.

The inner product and vector product of $\overrightarrow{\widehat{a}}$ $=%
\overrightarrow{a}+\varepsilon \overrightarrow{a^{\ast }}\in \mathbb{D}^{3}$
and $\overrightarrow{\widehat{b}}$ $=\overrightarrow{b}+\varepsilon 
\overrightarrow{b^{\ast }}$ $\in \mathbb{D}^{3}$ are given by 
\begin{eqnarray*}
\left\langle \overrightarrow{\widehat{a}},\overrightarrow{\widehat{b}}%
\right\rangle &=&\left\langle \overrightarrow{a},\overrightarrow{b}%
\right\rangle +\varepsilon \left( \left\langle \overrightarrow{a},%
\overrightarrow{b^{\ast }}\right\rangle +\left\langle \overrightarrow{%
a^{\ast }},\overrightarrow{b}\right\rangle \right) \\
\overrightarrow{\widehat{a}}\times \overrightarrow{\widehat{b}} &=&\left( 
\widehat{a}_{2}\widehat{b}_{3}-\widehat{a}_{3}\widehat{b}_{2}\text{ },%
\widehat{a}_{3}\widehat{b}_{1}-\widehat{a}_{1}\widehat{b}_{3}\text{ , }%
\widehat{a}_{1}\widehat{b}_{2}-\widehat{a}_{2}\widehat{b}_{1}\right)
\end{eqnarray*}%
where $\widehat{a}_{i}=a_{i}+\varepsilon a_{i}^{\ast }$ , $\widehat{b}%
_{i}=b_{i}+\varepsilon b_{i}^{\ast }\in \mathbb{D}$ $,$ $1\leq i\leq 3.$

The norm $\left\Vert \overrightarrow{\widehat{a}}\right\Vert $of $%
\overrightarrow{\widehat{a}}$ is defined by%
\begin{equation*}
\left\Vert \overrightarrow{\widehat{a}}\right\Vert =\sqrt{\left\langle 
\overrightarrow{\widehat{a}},\overrightarrow{\widehat{a}}\right\rangle }%
=\left\Vert \overrightarrow{a}\right\Vert +\varepsilon \frac{\left\langle 
\overrightarrow{a},\overrightarrow{a^{\ast }}\right\rangle }{\left\Vert 
\overrightarrow{a}\right\Vert }
\end{equation*}%
where $a\neq 0.$

If the norm of $\overrightarrow{\widehat{a}}$ is 1, then $\overrightarrow{%
\widehat{a}}$ is called dual unit vector.

Let%
\begin{equation*}
\begin{array}{ccc}
\widehat{\alpha }:I\subset \mathbb{D} & \longrightarrow & \mathbb{D}^{3}%
\phantom{= V_{i}(s); \quad 1\leq i\leq n-1} \\ 
\phantom{\alpha : }\lambda & \longrightarrow & \overrightarrow{\widehat{%
\alpha }}(\lambda )=\overrightarrow{\alpha }(\lambda )+\varepsilon 
\overrightarrow{\alpha ^{\ast }}(\lambda )%
\end{array}%
\end{equation*}%
be a dual space curve with differentiable vectors $\overrightarrow{\alpha }%
(\lambda )$ and $\overrightarrow{\alpha ^{\ast }}(\lambda )$. The dual
arc-length parameter of $\overrightarrow{\widehat{\alpha }}(\lambda )$ is
defined as 
\begin{equation*}
s=\dint\limits_{t_{1}}^{t}\left\Vert \overrightarrow{\widehat{\alpha }}%
(\lambda )^{\prime }\right\Vert d\lambda .
\end{equation*}

Now we will give dual Frenet vectors of the dual curve 
\begin{equation*}
\begin{array}{ccc}
\widehat{\alpha }:I\subset \mathbb{D} & \longrightarrow & \mathbb{D}^{3}%
\phantom{= V_{i}(s); \quad 1\leq i\leq n-1} \\ 
\phantom{\alpha : }s & \longrightarrow & \overrightarrow{\widehat{\alpha }}%
(s)=\overrightarrow{\alpha }(s)+\varepsilon \overrightarrow{\alpha ^{\ast }}%
(s)%
\end{array}%
\end{equation*}%
with the dual arc-length parameter $s.$ Then 
\begin{equation*}
\frac{d\overrightarrow{\widehat{\alpha }}}{d\widehat{s}}=\frac{d%
\overrightarrow{\widehat{\alpha }}}{ds}.\frac{ds}{d\widehat{s}}=%
\overrightarrow{\widehat{T}}
\end{equation*}%
is called the unit tangent vector of $\overrightarrow{\widehat{\alpha }}(s)$%
. The norm of the vector $\frac{d\overrightarrow{\widehat{T}}}{d\widehat{s}}$
which is given by%
\begin{equation*}
\frac{d\overrightarrow{\widehat{T}}}{d\widehat{s}}=\frac{d\overrightarrow{%
\widehat{T}}}{ds}.\frac{ds}{d\widehat{s}}=\frac{d^{2}\overrightarrow{%
\widehat{\alpha }}}{d\widehat{s}^{2}}=\widehat{\kappa }\overrightarrow{%
\widehat{N}}
\end{equation*}%
is called curvature function of $\overrightarrow{\widehat{\alpha }}(s)$.
Here $\widehat{\kappa }:I\longrightarrow \mathbb{D}$ is nowhere pure-dual.
Then the unit principal normal vector of $\overrightarrow{\widehat{\alpha }}%
(s)$ is defined as%
\begin{equation*}
\overrightarrow{\widehat{N}}=\frac{1}{\widehat{\kappa }}.\frac{d%
\overrightarrow{\widehat{T}}}{d\widehat{s}}
\end{equation*}%
The vector $\overrightarrow{\widehat{B}}=\overrightarrow{\widehat{T}}\times 
\overrightarrow{\widehat{N}}$ is called the binormal vector of $%
\overrightarrow{\widehat{\alpha }}(s)$. Also we call the vectors $%
\overrightarrow{\widehat{T}},\overrightarrow{\widehat{N}},\overrightarrow{%
\widehat{B}}$ Frenet trihedron of $\overrightarrow{\widehat{\alpha }}(s)$ at
the point $\widehat{\alpha }(s)$. The derivatives of dual Frenet vectors $%
\overrightarrow{\widehat{T}},\overrightarrow{\widehat{N}},\overrightarrow{%
\widehat{B}}$ can be written in matrix form as 
\begin{equation*}
\left[ 
\begin{array}{l}
\overrightarrow{\widehat{T}}^{\prime } \\ 
\overrightarrow{\widehat{N}}^{\prime } \\ 
\overrightarrow{\widehat{B}}^{\prime }%
\end{array}%
\right] =\left[ 
\begin{array}{lll}
0 & \widehat{\kappa } & 0 \\ 
-\widehat{\kappa } & 0 & \widehat{\tau } \\ 
0 & -\widehat{\tau } & 0%
\end{array}%
\right] \left[ 
\begin{array}{l}
\overrightarrow{\widehat{T}} \\ 
\overrightarrow{\widehat{N}} \\ 
\overrightarrow{\widehat{B}}%
\end{array}%
\right]
\end{equation*}%
which are called Frenet formulas \cite{Kos}. The function $\widehat{\tau }%
:I\longrightarrow \mathbb{D}$ such that $\frac{d\overrightarrow{\widehat{B}}%
}{d\widehat{s}}=-\widehat{\tau }\overrightarrow{\widehat{N}}$ is called the
torsion of $\overrightarrow{\widehat{\alpha }}(s)$ which is nowhere
pure-dual.

For a general parameter $t$ of a dual space curve $\overrightarrow{\widehat{%
\alpha }}$ , the curvature and torsion of $\overrightarrow{\widehat{\alpha }}
$ can be calculated as ;%
\begin{equation*}
\widehat{\kappa }=\frac{\left\Vert \widehat{\alpha }^{^{\prime }}\times 
\widehat{\alpha }^{^{\prime \prime }}\right\Vert }{\left\Vert \widehat{%
\alpha }^{^{\prime }}\right\Vert ^{3}}\text{ \ \ \ \ , \ \ }\widehat{\tau }=%
\frac{det\text{ }(\text{ }\widehat{\alpha }^{^{\prime }},\widehat{\alpha }%
^{^{\prime \prime }},\widehat{\alpha }^{^{\prime \prime \prime }})}{%
\left\Vert \widehat{\alpha }^{^{\prime }}\times \widehat{\alpha }^{^{\prime
\prime }}\right\Vert }.
\end{equation*}

\section{Bertrand curves in $\mathbb{D}^{3}$}

In this section, we define Bertrand curves in dual space $\mathbb{D}^{3}$
and give characterizations and theorems for these curves.

\textbf{Definition 1. }Let $\mathbb{D}^{3}$ be the dual space with standart
inner product $\left\langle ,\right\rangle $ and $\overrightarrow{\widehat{%
\alpha }}$ and $\overrightarrow{\widehat{\beta }}$ be the dual space curves.
\ If there exists a corresponding relationship between the dual space curves 
$\overrightarrow{\widehat{\alpha }}$ and $\overrightarrow{\widehat{\beta }}$
so that the principal normal vectors of $\overrightarrow{\widehat{\alpha }}$
and $\overrightarrow{\widehat{\beta }}$ are linear dependent to each other
at the corresponding points of the dual curves, then $\overrightarrow{%
\widehat{\alpha }}$ and $\overrightarrow{\widehat{\beta }}$ are called
Bertrand curves\ in $\mathbb{D}^{3}.$

Let the curves $\overrightarrow{\widehat{\alpha }}$ and $\overrightarrow{%
\widehat{\beta }}$ be Bertrand curves in $\mathbb{D}^{3}$, parameterized by
their arc-length $s$ and $\widehat{s}$, respectively. Let $\left\{ 
\overrightarrow{\widehat{T}}(s)\text{ , }\overrightarrow{\widehat{N}}(s)%
\text{ , }\overrightarrow{\widehat{B}}(s)\right\} $ indicate the unit Frenet
frame along $\overrightarrow{\widehat{\alpha }}$ and $\left\{ 
\overrightarrow{\widehat{T^{%
%TCIMACRO{\U{b0}}%
%BeginExpansion
{{}^\circ}%
%EndExpansion
}}}(s)\text{ , }\overrightarrow{\widehat{N^{%
%TCIMACRO{\U{b0}}%
%BeginExpansion
{{}^\circ}%
%EndExpansion
}}}(s)\text{ , }\overrightarrow{\widehat{B^{%
%TCIMACRO{\U{b0}}%
%BeginExpansion
{{}^\circ}%
%EndExpansion
}}}(s)\right\} $ the unit Frenet frame along $\overrightarrow{\widehat{\beta 
}}.$ Also $\widehat{\kappa }(s)=\kappa (s)+\varepsilon \kappa ^{\ast }(s)$
and $\widehat{\tau }(s)=\tau (s)+\varepsilon \tau ^{\ast }(s)$ are the
curvature and torsion of $\overrightarrow{\widehat{\alpha }}$ ,
respectively. Similarly, $\widehat{\kappa ^{%
%TCIMACRO{\U{b0}}%
%BeginExpansion
{{}^\circ}%
%EndExpansion
}}(s)$ $=\kappa ^{%
%TCIMACRO{\U{b0}}%
%BeginExpansion
{{}^\circ}%
%EndExpansion
}(s)+\varepsilon \kappa ^{%
%TCIMACRO{\U{b0}}%
%BeginExpansion
{{}^\circ}%
%EndExpansion
\ast }(s)$ and $\widehat{\tau ^{%
%TCIMACRO{\U{b0}}%
%BeginExpansion
{{}^\circ}%
%EndExpansion
}}(s)=\tau ^{%
%TCIMACRO{\U{b0}}%
%BeginExpansion
{{}^\circ}%
%EndExpansion
}(s)+\varepsilon \tau ^{%
%TCIMACRO{\U{b0}}%
%BeginExpansion
{{}^\circ}%
%EndExpansion
\ast }(s)$ are the curvature and torsion of $\overrightarrow{\widehat{\beta }%
}$ , respectively.

In the following theorems, we obtain the characterizations of a dual
Bertrand curve.

\begin{theorem}
Let\textbf{\ }$\overrightarrow{\widehat{\alpha }}$ and $\overrightarrow{%
\widehat{\beta }}$ be two curves in $\mathbb{D}^{3}.$ If $\overrightarrow{%
\widehat{\alpha }}$ and $\overrightarrow{\widehat{\beta }}$ are Bertrand
curves, then 
\begin{equation*}
d\left( \overrightarrow{\widehat{\alpha }}(s),\overrightarrow{\widehat{\beta 
}}(s)\right) =\widehat{c}
\end{equation*}%
where $s\in I\subset \mathbb{D}$ and $\widehat{c}\in \mathbb{D}$(constant).
\end{theorem}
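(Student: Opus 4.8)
The plan is to exploit the defining relation of a Bertrand pair to write $\overrightarrow{\widehat{\beta }}$ explicitly in terms of $\overrightarrow{\widehat{\alpha }}$, and then to show that the dual coefficient measuring their separation is constant. Since the principal normals are linearly dependent by Definition 1, at corresponding points I can write
\begin{equation*}
\overrightarrow{\widehat{\beta }}(s)=\overrightarrow{\widehat{\alpha }}(s)+\widehat{\lambda }(s)\,\overrightarrow{\widehat{N}}(s)
\end{equation*}
for some dual-valued function $\widehat{\lambda }(s)=\lambda (s)+\varepsilon \lambda ^{\ast }(s)$. Because $\overrightarrow{\widehat{N}}$ is a dual unit vector, the dual distance between corresponding points satisfies $d\left( \overrightarrow{\widehat{\alpha }}(s),\overrightarrow{\widehat{\beta }}(s)\right) =\left\Vert \overrightarrow{\widehat{\beta }}(s)-\overrightarrow{\widehat{\alpha }}(s)\right\Vert =\left\Vert \widehat{\lambda }(s)\,\overrightarrow{\widehat{N}}(s)\right\Vert$, which is determined by $\widehat{\lambda }(s)$ alone. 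Thus the whole problem reduces to proving that $\widehat{\lambda }$ is a dual constant.

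To do this I would differentiate the displayed relation with respect to $s$ and invoke the dual Frenet formulas recorded in the Preliminaries, in particular $\overrightarrow{\widehat{N}}^{\prime }=-\widehat{\kappa }\,\overrightarrow{\widehat{T}}+\widehat{\tau }\,\overrightarrow{\widehat{B}}$. This yields
\begin{equation*}
\frac{d\overrightarrow{\widehat{\beta }}}{ds}=\left( 1-\widehat{\lambda }\widehat{\kappa }\right) \overrightarrow{\widehat{T}}+\widehat{\lambda }^{\prime }\,\overrightarrow{\widehat{N}}+\widehat{\lambda }\widehat{\tau }\,\overrightarrow{\widehat{B}}.
\end{equation*}
On the other hand, regarding $\overrightarrow{\widehat{\beta }}$ as a function of $s$ through the correspondence and applying the chain rule gives $\frac{d\overrightarrow{\widehat{\beta }}}{ds}=\frac{d\widehat{s}}{ds}\,\overrightarrow{\widehat{T^{\circ }}}$, so the velocity of $\overrightarrow{\widehat{\beta }}$ is a dual multiple of its own unit tangent $\overrightarrow{\widehat{T^{\circ }}}$.

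The decisive step is to pair both expressions for $\frac{d\overrightarrow{\widehat{\beta }}}{ds}$ with $\overrightarrow{\widehat{N}}$ using the dual inner product. From the Frenet expansion the $\overrightarrow{\widehat{N}}$-component is exactly $\widehat{\lambda }^{\prime }$, whereas from the chain-rule expression it equals $\frac{d\widehat{s}}{ds}\left\langle \overrightarrow{\widehat{T^{\circ }}},\overrightarrow{\widehat{N}}\right\rangle$. Here the Bertrand hypothesis enters once more: since $\overrightarrow{\widehat{N^{\circ }}}$ is linearly dependent on $\overrightarrow{\widehat{N}}$ and $\overrightarrow{\widehat{T^{\circ }}}\perp \overrightarrow{\widehat{N^{\circ }}}$, we get $\left\langle \overrightarrow{\widehat{T^{\circ }}},\overrightarrow{\widehat{N}}\right\rangle =0$. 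Hence $\widehat{\lambda }^{\prime }=0$, so $\widehat{\lambda }$ is a dual constant $\widehat{c}\in \mathbb{D}$, and the distance is therefore constant.

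I expect the main obstacle to be handling the parameter correspondence carefully: $\overrightarrow{\widehat{\alpha }}$ is parameterized by $s$ while $\overrightarrow{\widehat{\beta }}$ has its own arc-length $\widehat{s}$, so the differentiation $\frac{d\overrightarrow{\widehat{\beta }}}{ds}$ must carry the factor $\frac{d\widehat{s}}{ds}$, and one must confirm this factor does not vanish so that $\overrightarrow{\widehat{T^{\circ }}}$ is genuinely the unit tangent. A secondary point is that "$\widehat{\lambda }$ constant in $\mathbb{D}$'' means both its real and dual parts are constant; since the dual inner product and the Frenet apparatus split componentwise in $\varepsilon $, the identity $\widehat{\lambda }^{\prime }=0$ does deliver this, but it is worth verifying that the nowhere-pure-dual hypotheses on $\widehat{\kappa }$ and $\widehat{\tau }$ keep every normalization and division legitimate throughout.
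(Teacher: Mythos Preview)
Your proposal is correct and follows essentially the same route as the paper: write $\overrightarrow{\widehat{\beta }}=\overrightarrow{\widehat{\alpha }}+\widehat{\lambda }\,\overrightarrow{\widehat{N}}$, differentiate via the dual Frenet formulas to obtain $\frac{d\widehat{s}}{ds}\,\overrightarrow{\widehat{T^{\circ }}}=(1-\widehat{\lambda }\widehat{\kappa })\overrightarrow{\widehat{T}}+\widehat{\lambda }^{\prime }\overrightarrow{\widehat{N}}+\widehat{\lambda }\widehat{\tau }\overrightarrow{\widehat{B}}$, and take the inner product with $\overrightarrow{\widehat{N}}$ to conclude $\widehat{\lambda }^{\prime }=0$. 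Your explicit justification that $\langle \overrightarrow{\widehat{T^{\circ }}},\overrightarrow{\widehat{N}}\rangle =0$ (via $\overrightarrow{\widehat{N^{\circ }}}\parallel \overrightarrow{\widehat{N}}$ and $\overrightarrow{\widehat{T^{\circ }}}\perp \overrightarrow{\widehat{N^{\circ }}}$) makes explicit a step the paper leaves implicit, but otherwise the arguments coincide.
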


\begin{proof}
Let $\overrightarrow{\widehat{\alpha }}$ and $\overrightarrow{\widehat{\beta 
}}$ be Bertrand curves.%
\begin{equation*}
\FRAME{itbpF}{3.5916in}{2.9507in}{0in}{}{}{Figure}{\special{language
"Scientific Word";type "GRAPHIC";maintain-aspect-ratio TRUE;display
"USEDEF";valid_file "T";width 3.5916in;height 2.9507in;depth
0in;original-width 4.9268in;original-height 4.0413in;cropleft "0";croptop
"1";cropright "1";cropbottom "0";tempfilename
'MM0J4L01.wmf';tempfile-properties "XPR";}}
\end{equation*}%
If $\overrightarrow{\widehat{\alpha }}$ and $\overrightarrow{\widehat{\beta }%
}$ are Bertrand curves\ , then we write from Figure 3.1%
\begin{equation}
\overrightarrow{\widehat{\beta }}(s)=\overrightarrow{\widehat{\alpha }}(s)+%
\widehat{\lambda }(s)\overrightarrow{\widehat{N}}(s)
\end{equation}

for the vectors of $\overrightarrow{\widehat{\alpha }}$ and $\overrightarrow{%
\widehat{\beta }}.$

If we differentiate the equation (3.1) with respect to $s$ and use the
Frenet equations, we get 
\begin{equation}
\frac{d\widehat{s}}{ds}.\overrightarrow{\widehat{T^{%
%TCIMACRO{\U{b0}}%
%BeginExpansion
{{}^\circ}%
%EndExpansion
}}}(s)=\left( 1-\widehat{\lambda }(s)\widehat{\kappa }(s)\right) .%
\overrightarrow{\widehat{T}}(s)+\widehat{\lambda }^{^{\prime }}(s).%
\overrightarrow{\widehat{N}}(s)+\widehat{\lambda }(s).\widehat{\tau }(s).%
\overrightarrow{\widehat{B}}(s).
\end{equation}

If we take the inner product of the equation (3.2) with $\overrightarrow{%
\widehat{N}}(s)$ both sides, 
\begin{equation*}
0=\widehat{\lambda }^{^{\prime }}(s)
\end{equation*}

is found.Then%
\begin{equation*}
\widehat{\lambda }(s)=\widehat{c}
\end{equation*}

where $\widehat{c}\in \mathbb{D}$(constant). If we use%
\begin{equation*}
d\left( \overrightarrow{\widehat{\alpha }}(s),\overrightarrow{\widehat{\beta 
}}(s)\right) =\left\Vert \overrightarrow{\widehat{\beta }}(s)-%
\overrightarrow{\widehat{\alpha }}(s)\right\Vert
\end{equation*}

and the equation (3.1), we obtain 
\begin{equation*}
d\left( \overrightarrow{\widehat{\alpha }}(s),\overrightarrow{\widehat{\beta 
}}(s)\right) =\widehat{c}
\end{equation*}

where $s\in I\subset \mathbb{D}$ and $\widehat{c}\in \mathbb{D}$(constant).

Namely we mean that the distance between the corresponding points of the
dual Bertrand curves is constant.
\end{proof}

\begin{theorem}
Let\textbf{\ }$\overrightarrow{\widehat{\alpha }}$ and $\overrightarrow{%
\widehat{\beta }}$ be two curves in $\mathbb{D}^{3}.$ If $\overrightarrow{%
\widehat{\alpha }}$ and $\overrightarrow{\widehat{\beta }}$ are Bertrand
curves, then the dual angle between the tangent vectors at the corresponding
points of the dual Bertrand curves is constant.
\end{theorem}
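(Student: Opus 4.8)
The plan is to show that the dual inner product $\langle \overrightarrow{\widehat{T}}(s), \overrightarrow{\widehat{T^{\circ}}}(s)\rangle$ is constant, which by the definition of dual angle (the dual cosine of the angle between two unit dual vectors equals their dual inner product) immediately yields that the dual angle is constant. I would begin from the same setup as Theorem~1, namely the relation $\overrightarrow{\widehat{\beta}}(s) = \overrightarrow{\widehat{\alpha}}(s) + \widehat{\lambda}\,\overrightarrow{\widehat{N}}(s)$ with $\widehat{\lambda} = \widehat{c}$ constant, as already established. Differentiating and applying the dual Frenet formulas, the term $\widehat{\lambda}'\overrightarrow{\widehat{N}}$ drops out, giving
\begin{equation*}
\frac{d\widehat{s}}{ds}\,\overrightarrow{\widehat{T^{\circ}}}(s) = \left(1 - \widehat{c}\,\widehat{\kappa}\right)\overrightarrow{\widehat{T}}(s) + \widehat{c}\,\widehat{\tau}\,\overrightarrow{\widehat{B}}(s).
\end{equation*}
This expresses $\overrightarrow{\widehat{T^{\circ}}}$ as a dual-linear combination of $\overrightarrow{\widehat{T}}$ and $\overrightarrow{\widehat{B}}$ alone, with no $\overrightarrow{\widehat{N}}$ component.

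Next I would introduce the dual angle $\widehat{\theta}(s)$ between $\overrightarrow{\widehat{T}}$ and $\overrightarrow{\widehat{T^{\circ}}}$ via $\cos\widehat{\theta} = \langle \overrightarrow{\widehat{T}}, \overrightarrow{\widehat{T^{\circ}}}\rangle$, so that the coefficient of $\overrightarrow{\widehat{T}}$ above is $\frac{d\widehat{s}}{ds}\cos\widehat{\theta}$ and, since $\{\overrightarrow{\widehat{T}},\overrightarrow{\widehat{N}},\overrightarrow{\widehat{B}}\}$ is orthonormal, the coefficient of $\overrightarrow{\widehat{B}}$ is $\frac{d\widehat{s}}{ds}\sin\widehat{\theta}$. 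The aim is then to prove $\widehat{\theta}' = 0$. The cleanest route is to differentiate $\cos\widehat{\theta} = \langle \overrightarrow{\widehat{T}}, \overrightarrow{\widehat{T^{\circ}}}\rangle$ with respect to $s$, using the Frenet formula $\overrightarrow{\widehat{T}}' = \widehat{\kappa}\,\overrightarrow{\widehat{N}}$ and the analogous formula $\frac{d\overrightarrow{\widehat{T^{\circ}}}}{d\widehat{s}} = \widehat{\kappa^{\circ}}\,\overrightarrow{\widehat{N^{\circ}}}$ for the second curve. Since the curves are Bertrand, $\overrightarrow{\widehat{N}}$ and $\overrightarrow{\widehat{N^{\circ}}}$ are linearly dependent, so both $\langle \overrightarrow{\widehat{N}}, \overrightarrow{\widehat{T^{\circ}}}\rangle$ and $\langle \overrightarrow{\widehat{T}}, \overrightarrow{\widehat{N^{\circ}}}\rangle$ vanish (each normal is orthogonal to its own tangent, and the two normals are parallel). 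Hence the derivative of $\cos\widehat{\theta}$ is zero, forcing $\widehat{\theta}$ constant.

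The main obstacle I anticipate is the careful handling of the factor $\frac{d\widehat{s}}{ds}$ and the dual chain rule when differentiating the inner product, since the two curves carry different arc-length parameters $s$ and $\widehat{s}$; one must consistently convert $\frac{d}{d\widehat{s}}$ into $\frac{d}{ds}\cdot\frac{ds}{d\widehat{s}}$ so that the scalar factors cancel and do not contaminate the orthogonality argument. A secondary subtlety is justifying that the dual scalar $\langle \overrightarrow{\widehat{N}}, \overrightarrow{\widehat{T^{\circ}}}\rangle = 0$ genuinely follows from the Bertrand (linear-dependence) hypothesis in the dual setting, which requires that the dual norm of $\overrightarrow{\widehat{N}}$ be a dual unit so that linear dependence of the two principal normals means they are dual-scalar multiples of one another. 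Once these bookkeeping points are settled, the conclusion $\widehat{\theta}' = 0$, and therefore the constancy of the dual angle, is immediate.
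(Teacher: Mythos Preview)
Your proposal is correct and follows essentially the same idea as the paper's proof: both arguments show that $\dfrac{d}{ds}\cos\phi = 0$ by differentiating, applying the Frenet formulas, and using the Bertrand hypothesis that $\overrightarrow{\widehat{N}}$ and $\overrightarrow{\widehat{N^{\circ}}}$ are linearly dependent (hence orthogonal to both tangents). The only cosmetic difference is that the paper first writes $\overrightarrow{\widehat{T^{\circ}}}=\cos\phi\,\overrightarrow{\widehat{T}}+\sin\phi\,\overrightarrow{\widehat{B}}$, differentiates that identity, and then takes the inner product with $\overrightarrow{\widehat{T}}$, whereas you differentiate the inner product $\langle\overrightarrow{\widehat{T}},\overrightarrow{\widehat{T^{\circ}}}\rangle$ directly via the product rule; the underlying computation and the use of the Bertrand condition are identical.
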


\begin{proof}
Let\textbf{\ }$\overrightarrow{\widehat{\alpha }}$ and $\overrightarrow{%
\widehat{\beta }}$ be two Bertrand curves in $\mathbb{D}^{3}.$

If the dual angle between the tangent vectors$\overrightarrow{\widehat{\text{
}T}}(s)$ and $\overrightarrow{\widehat{T^{%
%TCIMACRO{\U{b0}}%
%BeginExpansion
{{}^\circ}%
%EndExpansion
}}}(s)$ at the corresponding points of $\overrightarrow{\widehat{\alpha }}$
and $\overrightarrow{\widehat{\beta }}$ is%
\begin{equation*}
\phi =\varphi +\varepsilon \varphi ^{\ast }\in \mathbb{D}\text{,}
\end{equation*}

then we write 
\begin{equation}
\overrightarrow{\widehat{T^{%
%TCIMACRO{\U{b0}}%
%BeginExpansion
{{}^\circ}%
%EndExpansion
}}}(s)=\cos \phi \overrightarrow{\widehat{\text{ }T}}(s)+\sin \phi 
\overrightarrow{\widehat{\text{ }B}}(s).
\end{equation}

If we differentiate the equation (3.3), we get 
\begin{equation}
\widehat{\kappa ^{%
%TCIMACRO{\U{b0}}%
%BeginExpansion
{{}^\circ}%
%EndExpansion
}}(s)\overrightarrow{\widehat{N^{%
%TCIMACRO{\U{b0}}%
%BeginExpansion
{{}^\circ}%
%EndExpansion
}}}(s)\frac{d\widehat{s}}{ds}=\frac{d\cos \phi }{ds}\overrightarrow{\widehat{%
\text{ }T}}(s)+\left( \widehat{\kappa }(s)\cos \phi -\widehat{\tau }(s)\sin
\phi \right) \overrightarrow{\widehat{N}}(s)+\frac{d\sin \phi }{ds}%
\overrightarrow{\widehat{B}}(s).
\end{equation}

If we take the inner product of the equation (3.4) with $\overrightarrow{%
\widehat{T}}(s)$ both sides and we use the Frenet equations, we have

\begin{equation}
\frac{d\cos \phi }{ds}=0.
\end{equation}

So 
\begin{equation*}
\cos \phi =\text{constant}
\end{equation*}%
is found where $\phi =\varphi +\varepsilon \varphi ^{\ast }\in \mathbb{D}$.

This completes the proof.
\end{proof}

\begin{theorem}
Let\textbf{\ }$\overrightarrow{\widehat{\alpha }}$ and $\overrightarrow{%
\widehat{\beta }}$ be two curves in $\mathbb{D}^{3}.$If $\overrightarrow{%
\widehat{\alpha }}$ and $\overrightarrow{\widehat{\beta }}$ are Bertrand
curves , $\widehat{\kappa }(s)$ and $\widehat{\tau }(s)$ are the curvature
and torsion of $\overrightarrow{\widehat{\alpha }}$ \ $,$ $\widehat{\kappa ^{%
%TCIMACRO{\U{b0}}%
%BeginExpansion
{{}^\circ}%
%EndExpansion
}}(s)$ and $\widehat{\tau ^{%
%TCIMACRO{\U{b0}}%
%BeginExpansion
{{}^\circ}%
%EndExpansion
}}(s)$ are the curvature and torsion of $\overrightarrow{\widehat{\beta }}$\
, respectively, then%
\begin{equation*}
\widehat{\lambda }.\widehat{\kappa }(s)+\widehat{\mu }.\widehat{\tau }(s)=1
\end{equation*}%
where $\widehat{\lambda }$,$\widehat{\mu }$ $\in \mathbb{D}$ are constant.
\end{theorem}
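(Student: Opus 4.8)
The plan is to extract the linear relation directly from the differentiated Bertrand equation (3.2), now simplified by the two preceding theorems. First I would invoke Theorem 1: since $\widehat{\lambda}(s)=\widehat{c}$ is a dual constant, the normal component $\widehat{\lambda}^{\prime}(s)\overrightarrow{\widehat{N}}(s)$ in (3.2) vanishes, so that equation reduces to
\[
\frac{d\widehat{s}}{ds}\,\overrightarrow{\widehat{T^{\circ}}}(s)=\bigl(1-\widehat{\lambda}\,\widehat{\kappa}(s)\bigr)\overrightarrow{\widehat{T}}(s)+\widehat{\lambda}\,\widehat{\tau}(s)\,\overrightarrow{\widehat{B}}(s).
\]
This already expresses the tangent of $\overrightarrow{\widehat{\beta}}$ purely in the plane spanned by $\overrightarrow{\widehat{T}}(s)$ and $\overrightarrow{\widehat{B}}(s)$, which is exactly the content carried by the dual angle $\phi$.

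Next I would bring in Theorem 2, which gives $\overrightarrow{\widehat{T^{\circ}}}(s)=\cos\phi\,\overrightarrow{\widehat{T}}(s)+\sin\phi\,\overrightarrow{\widehat{B}}(s)$ with $\phi=\varphi+\varepsilon\varphi^{\ast}$ a dual constant (constancy of $\cos\phi$ forces $\varphi$, hence $\varphi^{\ast}$, and thus $\sin\phi$ and $\cot\phi$, to be constant as well). Substituting this into the displayed identity and comparing coefficients of the linearly independent frame vectors $\overrightarrow{\widehat{T}}(s)$ and $\overrightarrow{\widehat{B}}(s)$ yields the pair
\[
\frac{d\widehat{s}}{ds}\cos\phi=1-\widehat{\lambda}\,\widehat{\kappa}(s),\qquad \frac{d\widehat{s}}{ds}\sin\phi=\widehat{\lambda}\,\widehat{\tau}(s).
\]
Finally I would eliminate the scale factor $d\widehat{s}/ds$ between these two relations. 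Solving the second for $d\widehat{s}/ds$ and inserting it into the first gives $\widehat{\lambda}\,\widehat{\tau}(s)\cot\phi=1-\widehat{\lambda}\,\widehat{\kappa}(s)$, that is
\[
\widehat{\lambda}\,\widehat{\kappa}(s)+\bigl(\widehat{\lambda}\cot\phi\bigr)\widehat{\tau}(s)=1.
\]
Setting $\widehat{\mu}=\widehat{\lambda}\cot\phi$, which is a product of dual constants and hence itself a dual constant, produces the asserted relation $\widehat{\lambda}\,\widehat{\kappa}(s)+\widehat{\mu}\,\widehat{\tau}(s)=1$.

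The step I expect to require the most care is the elimination of $d\widehat{s}/ds$, because $\mathbb{D}$ is only a commutative ring and not a field: dividing by $\sin\phi$ (equivalently, forming $\cot\phi$) is legitimate only when $\sin\phi$ is invertible, i.e. when its real part $\sin\varphi$ is nonzero. I would therefore record this non-degeneracy as the standing hypothesis that the Bertrand mate is not a tangent-parallel partner, mirroring the classical condition $\lambda\neq 0$; under it $\widehat{\mu}$ is well defined and constant, and the coefficient comparison is valid because $\{\overrightarrow{\widehat{T}},\overrightarrow{\widehat{N}},\overrightarrow{\widehat{B}}\}$ is a dual basis, the vanishing $\overrightarrow{\widehat{N}}$-component being precisely the consistency check $\widehat{\lambda}^{\prime}=0$ from Theorem 1.
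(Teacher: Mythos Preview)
Your proof is correct and follows essentially the same route as the paper: both reduce (3.2) via Theorem~1, insert the angle decomposition $\overrightarrow{\widehat{T^{\circ}}}=\cos\phi\,\overrightarrow{\widehat{T}}+\sin\phi\,\overrightarrow{\widehat{B}}$ from Theorem~2, and extract $1-\widehat{\lambda}\,\widehat{\kappa}=\widehat{\lambda}\,\widehat{\tau}\cot\phi$ before setting $\widehat{\mu}=\widehat{\lambda}\cot\phi$. Your elimination of $d\widehat{s}/ds$ by taking the ratio of the two component equations is in fact cleaner than the paper, which instead asserts $d\widehat{s}/ds=\widehat{a}$ constant without justification, and your caveat on the invertibility of $\sin\phi$ in $\mathbb{D}$ is a refinement the paper omits.
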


\begin{proof}
Let $\overrightarrow{\widehat{\alpha }}$ and $\overrightarrow{\widehat{\beta 
}}$ are Bertrand curves. If the dual angle between the tangent vectors$%
\overrightarrow{\widehat{\text{ }T}}(s)$ and $\overrightarrow{\widehat{T^{%
%TCIMACRO{\U{b0}}%
%BeginExpansion
{{}^\circ}%
%EndExpansion
}}}(s)$ at the corresponding points of $\overrightarrow{\widehat{\alpha }}$
and $\overrightarrow{\widehat{\beta }}$ is 
\begin{equation*}
\phi =\varphi +\varepsilon \varphi ^{\ast }\in \mathbb{D}\text{,}
\end{equation*}%
then from the previous proof we have 
\begin{equation}
\overrightarrow{\widehat{T^{%
%TCIMACRO{\U{b0}}%
%BeginExpansion
{{}^\circ}%
%EndExpansion
}}}(s)=\cos \phi \overrightarrow{\widehat{\text{ }T}}(s)+\sin \phi 
\overrightarrow{\widehat{\text{ }B}}(s).
\end{equation}

From the equation (3.2) we write%
\begin{equation}
\frac{d\widehat{s}}{ds}.\overrightarrow{\widehat{T^{%
%TCIMACRO{\U{b0}}%
%BeginExpansion
{{}^\circ}%
%EndExpansion
}}}(s)=\left( 1-\widehat{\lambda }.\widehat{\kappa }(s)\right) .%
\overrightarrow{\widehat{T}}(s)+\widehat{\lambda }.\widehat{\tau }(s).%
\overrightarrow{\widehat{B}}(s).
\end{equation}

In above equations, if we take into account 
\begin{equation*}
\frac{d\widehat{s}}{ds}=\widehat{a}\text{ (constant)}
\end{equation*}%
then we get 
\begin{equation}
1-\widehat{\lambda }.\widehat{\kappa }(s)=\cot \phi .\widehat{\lambda }.%
\widehat{\tau }(s).
\end{equation}

We can write 
\begin{equation}
\widehat{\mu }=\widehat{\lambda }.\cot \phi \text{ }(\text{constant}).
\end{equation}

Finally, from the equations (3.8) and (3.9), we find 
\begin{equation*}
\widehat{\lambda }.\widehat{\kappa }(s)+\widehat{\mu }.\widehat{\tau }(s)=1.
\end{equation*}
\end{proof}

\begin{theorem}
Let $\overrightarrow{\widehat{\alpha }}$ be a plane curve in $\mathbb{D}%
^{3}. $ If $\overrightarrow{\widehat{\beta }}$ and $\overrightarrow{\widehat{%
\gamma }}$ are the involutes of $\overrightarrow{\widehat{\alpha }}$ , then $%
\overrightarrow{\widehat{\beta }}$ and $\overrightarrow{\widehat{\gamma }}$
are Bertrand curves in $\mathbb{D}^{3}$ .
\end{theorem}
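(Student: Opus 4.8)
The plan is to write both involutes explicitly in terms of the Frenet apparatus of the base curve and then read off their principal normals directly, checking linear dependence against Definition 1. Recall that an involute of $\overrightarrow{\widehat{\alpha }}$ is a curve whose tangent lines coincide with the normals of $\overrightarrow{\widehat{\alpha }}$; parametrising $\overrightarrow{\widehat{\alpha }}$ by its dual arc-length $s$, the two involutes take the form
\begin{equation*}
\overrightarrow{\widehat{\beta }}(s)=\overrightarrow{\widehat{\alpha }}(s)+(\widehat{c}_{1}-s)\overrightarrow{\widehat{T}}(s),\qquad \overrightarrow{\widehat{\gamma }}(s)=\overrightarrow{\widehat{\alpha }}(s)+(\widehat{c}_{2}-s)\overrightarrow{\widehat{T}}(s),
\end{equation*}
with distinct dual constants $\widehat{c}_{1},\widehat{c}_{2}\in \mathbb{D}$.

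First I would exploit the hypothesis that $\overrightarrow{\widehat{\alpha }}$ is planar, i.e. $\widehat{\tau }=0$, so that the Frenet formulas reduce to $\overrightarrow{\widehat{T}}^{\prime }=\widehat{\kappa }\,\overrightarrow{\widehat{N}}$, $\overrightarrow{\widehat{N}}^{\prime }=-\widehat{\kappa }\,\overrightarrow{\widehat{T}}$, and $\overrightarrow{\widehat{B}}^{\prime }=0$. Differentiating the expression for $\overrightarrow{\widehat{\beta }}$ and cancelling the $\overrightarrow{\widehat{T}}$ terms gives $\overrightarrow{\widehat{\beta }}^{\prime }(s)=(\widehat{c}_{1}-s)\widehat{\kappa }(s)\,\overrightarrow{\widehat{N}}(s)$, so that the unit tangent of $\overrightarrow{\widehat{\beta }}$ is, up to sign, the principal normal $\overrightarrow{\widehat{N}}(s)$ of the base curve.

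Next I would compute the principal normal of $\overrightarrow{\widehat{\beta }}$. Since its unit tangent is $\overrightarrow{\widehat{N}}$, the chain rule yields $\frac{d}{d\widehat{s}_{\beta }}\overrightarrow{\widehat{N}}=\frac{ds}{d\widehat{s}_{\beta }}\,\overrightarrow{\widehat{N}}^{\prime }=-\frac{ds}{d\widehat{s}_{\beta }}\widehat{\kappa }\,\overrightarrow{\widehat{T}}$, which is parallel to the tangent $\overrightarrow{\widehat{T}}(s)$ of $\overrightarrow{\widehat{\alpha }}$. Hence the unit principal normal of $\overrightarrow{\widehat{\beta }}$ is $\pm \overrightarrow{\widehat{T}}(s)$. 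The identical computation performed for $\overrightarrow{\widehat{\gamma }}$, using the constant $\widehat{c}_{2}$ in place of $\widehat{c}_{1}$, produces a unit principal normal equal to $\pm \overrightarrow{\widehat{T}}(s)$ as well.

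Therefore, at corresponding points the principal normal vectors of $\overrightarrow{\widehat{\beta }}$ and $\overrightarrow{\widehat{\gamma }}$ are both collinear with $\overrightarrow{\widehat{T}}(s)$, so they are linearly dependent, and by Definition 1 the curves $\overrightarrow{\widehat{\beta }}$ and $\overrightarrow{\widehat{\gamma }}$ are Bertrand curves in $\mathbb{D}^{3}$. The main obstacle I anticipate is purely bookkeeping in the dual setting: one must track the reparametrisation factor $ds/d\widehat{s}_{\beta }$ and justify that it is invertible, which follows from $\widehat{\kappa }$ being nowhere pure-dual (so the relevant dual numbers have nonzero real part and division in $\mathbb{D}$ is legitimate), and one must confirm that the sign normalisations do not spoil collinearity. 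Since Definition 1 only demands linear dependence of the principal normals, the signs are immaterial, and this last point causes no difficulty.
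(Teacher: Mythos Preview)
Your argument is correct and actually more self-contained than the paper's. The paper does not compute the Frenet frames of the involutes directly; instead it quotes from \cite{Guv2} a closed formula for the torsion $\widehat{\tau^{\circ}}$ of an involute in terms of $\widehat{\kappa},\widehat{\tau}$, observes that $\widehat{\tau}=0$ forces $\widehat{\tau^{\circ}}=0$, and hence that both involutes are planar, concluding from this that their principal normals are linearly dependent. Your route avoids the external citation entirely: by differentiating the explicit parametrisations you identify the principal normal of each involute with $\pm\overrightarrow{\widehat{T}}(s)$, so the linear dependence required by Definition~1 is exhibited concretely rather than inferred from planarity. The trade-off is that the paper's approach generalises more readily if one already has the torsion formula in hand, whereas yours is elementary and makes the geometry of the Bertrand correspondence (common normal line along $\overrightarrow{\widehat{T}}$) completely transparent. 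One cosmetic point: your opening description of an involute (``tangent lines coincide with the normals of $\overrightarrow{\widehat{\alpha}}$'') inverts the usual phrasing; the formula you write down is the standard one, so this does not affect the argument.
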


\begin{proof}
Let $\overrightarrow{\widehat{\beta }}$ and $\overrightarrow{\widehat{\gamma 
}}$ be the involutes of $\overrightarrow{\widehat{\alpha }}.$%
\begin{equation*}
\FRAME{itbpF}{6.2388in}{2.7778in}{0in}{}{}{Figure}{\special{language
"Scientific Word";type "GRAPHIC";maintain-aspect-ratio TRUE;display
"USEDEF";valid_file "T";width 6.2388in;height 2.7778in;depth
0in;original-width 6.2811in;original-height 2.7812in;cropleft "0";croptop
"1";cropright "1";cropbottom "0";tempfilename
'MM0L7O02.wmf';tempfile-properties "XPR";}}
\end{equation*}

It can be written from \cite{Guv2}

\begin{equation}
\widehat{\tau ^{%
%TCIMACRO{\U{b0}}%
%BeginExpansion
{{}^\circ}%
%EndExpansion
}}(s)=\frac{\widehat{\kappa }(s).\widehat{\tau }^{^{\prime }}(s)-\widehat{%
\kappa }^{^{\prime }}(s).\widehat{\tau }(s)}{\widehat{\kappa }(s).(\widehat{c%
}-s).\left( \widehat{\kappa }^{2}(s)+\widehat{\tau }^{2}(s)\right) }
\end{equation}%
where $\widehat{\kappa }(s)$, $\widehat{\tau }(s)$ and $\widehat{\kappa ^{%
%TCIMACRO{\U{b0}}%
%BeginExpansion
{{}^\circ}%
%EndExpansion
}}(s)$ , $\widehat{\tau ^{%
%TCIMACRO{\U{b0}}%
%BeginExpansion
{{}^\circ}%
%EndExpansion
}}(s)$ are the curvature and torsion of $\overrightarrow{\widehat{\alpha }}$
and $\overrightarrow{\widehat{\beta }}$ , respectively, and $\widehat{c}\in 
\mathbb{D(}$constant$\mathbb{)}$.

If $\overrightarrow{\widehat{\alpha }}$ is a plane curve ,%
\begin{equation}
\widehat{\tau }(s)=0.
\end{equation}

From the equation (3.10) and (3.11), we have 
\begin{equation*}
\widehat{\tau ^{%
%TCIMACRO{\U{b0}}%
%BeginExpansion
{{}^\circ}%
%EndExpansion
}}(s)=0.
\end{equation*}

So $\overrightarrow{\widehat{\beta }}$ is a plane curve. Similarly, $%
\overrightarrow{\widehat{\gamma }}$ is a plane curve. Consequently, the
principal normal vectors of $\overrightarrow{\widehat{\beta }}$ and $%
\overrightarrow{\widehat{\gamma }}$ are linear dependent to each other at
the corresponding points of the dual curves. In that case $\overrightarrow{%
\widehat{\beta }}$ and $\overrightarrow{\widehat{\gamma }}$ curves are
Bertrand curves in $\mathbb{D}^{3}.$
\end{proof}

\bigskip

\.{I}lkay Arslan G\"{u}ven

University of Gaziantep

Department of Mathematics

\c{S}ehitkamil, 27310, Gaziantep, Turkey

E-mail: iarslan@gantep.edu.tr, ilkayarslan81@hotmail.com

\bigskip

\.{I}pek A\u{g}ao\u{g}lu

University of Gaziantep

Department of Mathematics

\c{S}ehitkamil, 27310, Gaziantep, Turkey

E-mail: agaogluipek@gmail.com

\end{document}